\newcommand{\seq}{\subseteq}
\newcommand{\stm}{\setminus}
\newcommand{\est}{\varnothing}
\newcommand{\gam}{\gamma}
\renewcommand{\phi}{\varphi}
\newcommand{\R}{{\mathbb R}}
\newcommand{\Z}{{\mathbb Z}}
\newcommand{\tA}{{\tilde A}}
\newcommand{\cA}{{\mathcal A}}
\newcommand{\cL}{{\mathcal L}}
\newcommand{\prt}{\partial}
\newcommand{\<}{\langle}
\renewcommand{\>}{\rangle}
\newcommand{\longc}{,\ldots,}
\newcommand{\longp}{+\dots+}
\newcommand{\longu}{\cup\dots\cup}
\newcommand{\longo}{\oplus\dots\oplus}
\newcommand{\rk}{\mathop{\mathrm{rk}}}
\newcommand{\ord}{\mathop{\mathrm{ord}}}
\newcommand{\supp}{\mathop{\mathrm{supp}}}
\newcommand{\reft}[1]{\ref{t:#1}}
\newcommand{\refc}[1]{\ref{c:#1}}
\newcommand{\refm}[1]{\ref{m:#1}}
\newcommand{\refs}[1]{\ref{s:#1}}
\newcommand{\refx}[1]{\ref{x:#1}}
\newcommand{\refe}[1]{\eqref{e:#1}}
\newtheorem{claim}{Claim}
\newtheorem{theorem}{Theorem}
\newtheorem{corollary}{Corollary}
\newtheorem{primetheorem}{Theorem}
\theoremstyle{remark}
\newtheorem{example}{Example}
\begin{document}

\begin{frontmatter}[classification=text]

\title{On Isoperimetric Stability} 

\author[lev]{Vsevolod F. Lev}

\begin{abstract}
We show that a non-empty subset of an abelian group with a small edge
boundary must be large; in particular, if $A$ and $S$ are finite, non-empty
subsets of an abelian group such that $S$ is independent, and the edge
boundary of $A$ with respect to $S$ does not exceed $(1-\gam)|S||A|$ with a
real $\gam\in(0,1]$, then $|A|\ge4^{(1-1/d)\gam |S|}$, where $d$ is the
smallest order of an element of $S$. Here the constant $4$ is best possible.

As a corollary, we derive an upper bound for the size of the largest
independent subset of the set of popular differences of a finite subset of an
abelian group. For groups of exponent $2$ and $3$, our bound translates into
a sharp estimate for the additive dimension of the popular difference set.

We also prove, as an auxiliary result, the following estimate of possible
independent interest: if $A\seq\Z^n$ is a finite, non-empty downset then,
denoting by $w(a)$ the number of non-zero components of the vector $a\in A$,
we have
  \[\frac1{|A|} \sum_{a\in A} w(a) \le \frac12\, \log_2 |A|.\]
\end{abstract}
\end{frontmatter}

\section{Summary of Results}\label{s:sumres}

Let $G$ be an abelian group. For finite subsets $A,S\seq G$, denote by
$\prt_S(A)$ the number of edges from $A$ to its complement $G\setminus A$ in
the directed Cayley graph, induced on $G$ by $S$; that is,
  $$ \prt_S(A) = | \{ (a,s) \in A\times S \colon a+s \notin A \} |. $$
In a trivial way, we have $0\le \prt_S(A)\le |S||A|$. Clearly, if
$\prt_S(A)=0$, then $A$ is a union of cosets of the subgroup $\<S\>$
generated by $S$; in particular, $|A|\ge|\<S\>|$. What can be said about $A$
if we are given that $\prt_S(A)\le(1-\gam)|S||A|$, with some $\gam\in(0,1]$? How small
can $|A|$ be under this assumption? In the case where $G$ is homocyclic of
exponent not exceeding $4$ (that is, $G=C_m^n$ with $m\in\{2,3,4\}$ and $n\ge
1$), and $S$ is a standard generating subset of $G$, the answer is given by
the following easy consequence of \cite[Corollary~1.10]{b:l}.
\begin{theorem}\label{t:exp234}
Let $G$ be a homocyclic group of exponent $\exp(G)\in\{2,3,4\}$ and rank
$n:=\rk G$. If $A\seq G$ is non-empty and $\prt_S(A)\le(1-\gam)n|A|$ with a
generating subset $S\seq G$ and real $\gam\in(0,1]$, then
  $$ |A| \ge |G|^\gam. $$
\end{theorem}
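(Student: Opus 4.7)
My plan is to establish the edge-isoperimetric inequality
\[
\prt_S(A) \ge |A|\,(n - \log_m |A|)
\]
for $G = C_m^n$ with $m \in \{2,3,4\}$, any non-empty $A \seq G$, and the standard generating set $S = \{e_1,\ldots,e_n\}$. The theorem then follows at once: combining this with the hypothesis $\prt_S(A) \le (1-\gam)n|A|$ gives $\log_m |A| \ge \gam n$, hence $|A| \ge m^{\gam n} = |G|^\gam$. For a general generating subset $S$ one first extracts a basis $S' \seq S$ of $G$ of size $n$ (possible since $\mathbb Z/m$ is local), applies an automorphism of $G$ to assume $S' = \{e_1,\ldots,e_n\}$, and uses the monotonicity $\prt_S(A) \ge \prt_{S'}(A)$ to reduce to the standard case.

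To prove the displayed inequality I would reduce to one dimension. For each coordinate $i$, the contribution of $e_i$ to $\prt_S(A)$ decomposes over the $(n-1)$-dimensional fibres as $\sum_b f(B_b^{(i)})$, where $B_b^{(i)} \seq C_m$ is the $i$-th trace of $A$ over the fibre $b$ and $f(B) := |\{j \in B : j+1 \notin B\}|$ is the directed cyclic boundary. The crucial one-dimensional estimate is
\[
f(B) \ge |B|\,(1 - \log_m |B|) \quad \text{for every } B \seq C_m,
\]
which for $m \in \{2,3,4\}$ reduces --- via $f(B) \ge 1$ whenever $B$ is a proper non-empty subset of $C_m$ --- to the elementary fact $k^k \ge m^{k-1}$ for $1 \le k \le m$. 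This is precisely where the exponent restriction enters; the inequality fails already at $k=2$, $m=5$. This one-dimensional bound is essentially the content of \cite[Corollary~1.10]{b:l}.

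To assemble the global estimate I would sum the one-dimensional bounds over $i$ and apply Han's inequality to the uniform distribution on $A$, which yields
\[
\sum_{i=1}^n \sum_b |B_b^{(i)}|\,\log_m |B_b^{(i)}| \le |A|\log_m |A|.
\]
Substituting gives $\prt_S(A) \ge |A|(n - \log_m |A|)$, as required. I expect the main obstacle to lie in establishing (or correctly invoking) the one-dimensional inequality on $C_m$; once that is packaged together with the cyclic slicing, the higher-dimensional assembly via Han's inequality is a routine entropy computation.
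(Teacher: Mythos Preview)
Your argument is correct. The one-dimensional check $k^k\ge m^{k-1}$ for $1\le k<m\le 4$ is exactly the place where the exponent restriction bites, and the assembly via Han's inequality applied to the uniform distribution on $A$ is clean and correct: with $X$ uniform on $A$ one has $H(X_i\mid X_{-i})=\sum_b\frac{|B_b^{(i)}|}{|A|}\log_m|B_b^{(i)}|$, and Han gives $\sum_i H(X_i\mid X_{-i})\le H(X)=\log_m|A|$, which is precisely the inequality you need. The reduction from an arbitrary generating set to a basis of size $n$ via locality of $\Z/m\Z$ and the monotonicity $\prt_{S'}(A)\le\prt_S(A)$ is also fine.

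The paper, however, does not reprove anything: it simply invokes \cite[Corollary~1.10]{b:l}, which already states the full $n$-dimensional edge-isoperimetric inequality $\prt_S(A)\ge|A|\log_m(|G|/|A|)$ for \emph{any} generating set $S$ of a finite abelian group of exponent $m\in\{2,3,4\}$, and then reads off $|A|\ge|G|^\gam$ in one line. So your proposal is not the paper's proof but rather a self-contained entropy proof of (the relevant case of) that cited corollary. Two small remarks: first, your attribution is slightly off---\cite[Corollary~1.10]{b:l} is the $n$-dimensional inequality itself, not the one-dimensional bound you isolate; second, the ``main obstacle'' you anticipate in the one-dimensional step is in fact the trivial part (a finite check), whereas the genuine content lies in the Han/Shearer step that you describe as routine.
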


We postpone the proofs of Theorem~\reft{exp234} and other results presented
in this section to Sections~\refs{avweightZ} -- \refs{repa}.

The following examples show that the assumption of Theorem~\reft{exp234}
cannot be relaxed to $\prt_S(A)\le(1-\gam)|S||A|$, and that the conclusion
$|A|\ge|G|^\gam$ is nearly best possible.
\begin{example}
Fix $m\ge 2$ and suppose that $k,n\ge 1$ are integers such that
 $k=\log_m n+O(1)$, with an absolute implicit constant. Let
$\{e_1\longc e_n\}$ be a standard generating subset of the group $C_m^n$, and
consider the sets $A:=\<e_1\longc e_k\>$ and
 $S:=A\cup\{e_{k+1}\longc e_n\}$. (We keep using the standard notation $\<T\>$ for
the subgroup of an abelian group generated by its subset $T$.) We have then
$|S|=m^k+n-k$ and $\prt_S(A)=(n-k)|A|=(1-\gam)|S||A|$ where, writing
$\tau:=m^k/n$,
  $$ \gam = \frac{m^k}{m^k+n-k} \approx \frac{\tau}{\tau+1}. $$
At the same time, $|A|=m^k$ is much smaller than $|C_m^n|^\gam=m^{\gam n}$.
\end{example}

\begin{example}\label{x:1}
Suppose that $m\ge 2$ and $k,n\ge 1$ are integers with $k\mid n$. Let
$C_m^n=H_1\longo H_k$ be a direct sum decomposition with every subgroup $H_i$
isomorphic to $C_m^{n/k}$, and let $S\seq C_m^n$ be an $n$-element generating
subset having exactly $n/k$ elements in every subgroup $H_i$. Consider the
set $A:=H_1\longu H_k$. It is easily seen that $|A|=(m^{n/k}-1)k+1$ and
$\prt_S(A)=(m^{n/k}-1)(k-1)n$. Consequently, letting $\gam:=k^{-1}$, we have
$\prt_S(A)<(1-\gam)n|A|$, while $|A|\le m^{n/k}k=\gam^{-1}|C_m^n|^{\gam}$.
\end{example}

In the case where $\exp(G)\in\{2,3\}$, Theorem~\reft{exp234} is easy to
extend to the situation where $S$ is an arbitrary (not necessarily
generating) subset of $G$.
\begin{corollary}\label{c:cosetdecomp}
Suppose that $A$ and $S$ are finite, non-empty subsets of an abelian group
$G$ of exponent $\exp(G)\in\{2,3\}$. Let $H:=\<S\>$ and write $n:=\rk H$. If
$\prt_S(A)\le(1-\gam)n|A|$ with a real $\gam\in(0,1]$, then
  $$ |A| \ge |H|^\gam. $$
\end{corollary}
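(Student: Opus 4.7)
My plan is to reduce Corollary \refc{cosetdecomp} to Theorem \reft{exp234} by partitioning $A$ according to cosets of $H$ and locating a single coset where the bulk of the ``boundary deficit'' concentrates. Since $G$ has prime exponent in $\{2,3\}$, every subgroup is a vector space over the corresponding prime field, so $H\cong C_m^n$ with $m:=\exp(G)\in\{2,3\}$ and $n=\rk H$; consequently Theorem \reft{exp234} is available for subsets of $H$ with the generating set $S$.

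First I would decompose $A=A_1\sqcup\dots\sqcup A_k$ with each $A_i$ the non-empty intersection of $A$ with a distinct coset $g_i+H$. Because $S\seq H$, for every $(a,s)\in A_i\times S$ one has $a+s\in g_i+H$, so edges never cross between different $A_i$; hence
\[ \prt_S(A)=\sum_{i=1}^k \prt_S(A_i) = \sum_{i=1}^k \prt_S(A_i'), \]
where $A_i':=A_i-g_i\seq H$ is non-empty.

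Next I would introduce numbers $\gam_i$ via $\prt_S(A_i')=(1-\gam_i)n|A_i'|$, so that the hypothesis $\prt_S(A)\le(1-\gam)n|A|$ rewrites as
\[ \sum_{i=1}^k \gam_i |A_i'| \ge \gam \sum_{i=1}^k |A_i'|. \]
That is, the $|A_i'|$-weighted average of the $\gam_i$ is at least $\gam$, and since an average never exceeds the maximum there is an index $i_0$ with $\gam_{i_0}\ge\gam$. The bound $\gam_i\le 1$ holds automatically (as $\prt_S(A_i')\ge 0$), so $\gam_{i_0}\in(0,1]$, and Theorem \reft{exp234} applied to $A_{i_0}'\seq H$ with parameter $\gam_{i_0}$ and generating set $S$ yields
\[ |A|\ge|A_{i_0}'|\ge|H|^{\gam_{i_0}}\ge|H|^\gam, \]
which is the desired bound.

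The computation is essentially routine; the only conceptual point is that some individual $\gam_i$ may be non-positive (a coset could carry a large boundary while others carry very little), which prevents a direct coset-by-coset application of Theorem \reft{exp234}. Selecting the single coset on which $\gam_i$ is largest, justified by the weighted-average inequality above, bypasses this issue, so I expect no substantial obstacle beyond keeping the bookkeeping clean.
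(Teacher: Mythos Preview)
Your proposal is correct and follows essentially the same route as the paper: decompose $A$ into its intersections with $H$-cosets, use that $\prt_S$ is additive over these pieces, select by averaging a single coset with $\prt_S(A_i)\le(1-\gam)n|A_i|$, and apply Theorem~\reft{exp234} inside $H$ (which is homocyclic since $\exp(G)\in\{2,3\}$). The only minor omission is the degenerate case $n=0$ (equivalently $S=\{0\}$), which the paper disposes of separately; your definition of the $\gam_i$ tacitly assumes $n\ge1$, but the case $n=0$ is trivial anyway.
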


Interestingly, Theorem~\reft{exp234} does not allow a straightforward
extension to the case where $\exp(G)>4$; this is demonstrated by our next
example.
\begin{example}
Suppose that $1<t<m$ and $n\ge 1$ are integers, and consider the set
$A:=[0,t-1]^n\seq C_m^n$; thus, $|A|=t^n$, and it is easily seen that if $S$
is a standard generating set in $C_m^n$, then $\prt_S(A)=nt^{n-1}$.
Consequently, letting $\gam:=1-t^{-1}$, we have $\prt_S(A)=(1-\gam)n|A|$
while $|A|=b^{\gam n}$, where $b=t^{\gam^{-1}}=\exp(t\log t/(t-1))$ can be as
small as $4$ (attained for $t=2$). Therefore, instead of the estimate
$|A|\ge|G|^\gam$ of Theorem~\reft{exp234}, the best estimate one can hope for
in the general case is $|A|\ge 4^{\gam n}$, where $n=\rk G$.
\end{example}

We say that a finite subset $S$ of an abelian group is \emph{independent} if
for any integer-valued function $k$ on $S$, we have
 $\sum_{s\in S} k(s)s\ne 0$, unless all summands are equal to $0$; that is, the
sum $\oplus_{s\in S}\<s\>$ is direct.

Extending Theorem~\reft{exp234} to arbitrary abelian groups, we have the
following result.
\begin{theorem}\label{t:generalcase}
Suppose that $A$ and $S$ are finite, non-empty subsets of an abelian group
such that $S$ is independent. Write $n:=|S|$ and $d:=\min\{\ord s\colon s\in
S\}$. If $\prt_S(A)\le(1-\gam)n|A|$ with a real $\gam\in(0,1]$, then
  $$ |A| \ge 4^{(1-1/d)\gam n}. $$
\end{theorem}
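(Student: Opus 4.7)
The plan is to reduce to $A\seq\<S\>$, identify $\<S\>$ with a product of cyclic groups via independence of $S$, compress $A$ to a downset without increasing $\prt_S(A)$, and then derive the bound by combining a simple boundary identity with the downset inequality stated in the abstract together with a monotonicity observation. First, decomposing $A$ as a disjoint union $A=\bigsqcup_j(A\cap(x_j+H))$ over cosets of $H:=\<S\>$ makes $\prt_S(\cdot)$ additive, and an averaging argument supplies some coset piece that still satisfies the hypothesis with the same $\gam$; after translation, $A\seq H$. Independence of $S=\{s_1\longc s_n\}$ gives $H\cong C_{d_1}\longo C_{d_n}$ with $d_i:=\ord s_i\ge d$, and $S$ maps to the standard generators $\{e_1\longc e_n\}$. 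I would then apply the standard iterated compression, replacing each direction-$i$ fiber of $A$ by the arc $\{0,1\longc k-1\}$ of the same size; the fixed point of this process is an $A$ that is simultaneously $i$-compressed for every $i$, equivalently a downset in $[0,d_1-1]\times\dots\times[0,d_n-1]\seq\Z^n$.

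For the compressed downset, the direction-$i$ boundary equals $|\pi_i(A)|-k_i$, where $\pi_i$ drops coordinate $i$ and $k_i$ counts the full direction-$i$ fibers; together with $|A\cap\{a_i=0\}|=|\pi_i(A)|$ (which holds for any downset), summing over $i$ gives
\[ n|A|-\prt_S(A) = \sum_{a\in A}w(a) + \sum_i k_i. \]
The key monotonicity observation is that in a downset the level sizes $|\{a\in A\colon a_i=j\}|$ are non-increasing in $j$, since $a\mapsto a-e_i$ injects level $j+1$ into level $j$. Summing over $j=1\longc d_i-1$ and using $d_i\ge d$ yields $|\{a\colon a_i>0\}|\ge(d_i-1)|\{a\colon a_i=d_i-1\}|\ge(d-1)k_i$; summing over $i$, this gives $\sum_a w(a)\ge(d-1)\sum_i k_i$. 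Therefore
\[ n|A|-\prt_S(A) \le \frac{d}{d-1}\sum_a w(a) \le \frac{d}{2(d-1)}|A|\log_2|A|\]
by the downset inequality from the abstract. Combined with $\prt_S(A)\le(1-\gam)n|A|$, this yields $\gam n\le \frac{d}{2(d-1)}\log_2|A|$, i.e.\ $|A|\ge 4^{(1-1/d)\gam n}$.

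The main obstacle I anticipate is justifying the cyclic compression step: that a single $i$-compression does not increase $\prt_S$ in any of the $n$ coordinate directions, and that iteration terminates at a simultaneously compressed configuration. This is standard in the Boolean hypercube and folklore for $C_m^n$, but writing it out cleanly in the \emph{directed-edge} cyclic setting requires some case analysis. Everything else --- the coset reduction, the boundary identity, the level-size monotonicity, and the final arithmetic --- is essentially one-line once the downset reduction is in place.
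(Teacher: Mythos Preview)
Your proposal is correct and follows essentially the same route as the paper: reduce to $A\subseteq\langle S\rangle$ by a coset--averaging argument, compress to a downset, establish $\frac1{|A|}\sum_{a\in A}w(a)\ge(1-1/d)\gam n$, and finish with Theorem~\reft{avweightZ}. The paper proves your anticipated obstacle as Claims~\refm{comp1} and~\refm{comp2}; the only cosmetic difference is that where you bound $\sum_i k_i$ via level-size monotonicity, the paper uses the equivalent one-line count $\prt_S^{(i)}(A)\le|A|-d_iN_i'$ (each full fiber contributes $d_i$ elements, none of them boundary) and eliminates $N_i'$ algebraically.
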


The statement of Theorem~\reft{generalcase} is to be interpreted the expected
way if some, or all, of the orders of the elements of $S$ are infinite; in
particular, in the latter case the conclusion of the theorem should be read
as $|A|\ge4^{\gam n}$.

As Example~\refx{1} shows, the coefficient $1-1/d$ in the exponent is best
possible for $d=2$, and cannot be replaced with a number larger than
$\log3/\log4\approx0.792$ for $d=3$.

Let $\Z_{\ge0}$ denote the set of non-negative integers. A set
$A\seq\Z^n_{\ge 0}$ is called a \emph{downset} if for every $a\in A$ and
every $z\in\Z^n_{\ge 0}$ majorated by $a$ coordinate-wise, we have $z\in A$.
The \emph{weight} of a vector $z\in\Z^n$, denoted below $w(z)$, is the number
of non-zero coordinates of $z$.

The following estimate for the average weight of a vector in a downset in
$\Z_{\ge 0}^n$ is an important ingredient of the proof of
Theorem~\reft{generalcase} and, we believe, may be found interesting in its own
right.

\begin{theorem}\label{t:avweightZ}
If $n\ge 1$ is an integer and $A\seq\Z_{\ge 0}^n$ is a finite, non-empty
downset, then
  $$ \frac1{|A|} \sum_{a\in A} w(a) \le \frac12\, \log_2 |A|. $$
\end{theorem}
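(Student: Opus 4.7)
The plan is to prove the theorem by induction on the dimension $n$. The base case $n=1$ amounts to verifying the inequality $2(k-1)\leq k\log_2 k$ for every positive integer $k$, which is tight at $k\in\{1,2\}$ and for $k\geq 3$ follows by a short convexity argument (the relevant function has a positive derivative at $k=2$ and a positive second derivative throughout).

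For the inductive step I slice the downset along the last coordinate: let $A_i:=\{a'\in\Z_{\geq 0}^{n-1}\colon (a',i)\in A\}$. The downset hypothesis forces the slices to be nested, $A_0\supseteq A_1\supseteq\cdots$, with each $A_i$ itself a finite downset in $\Z_{\geq 0}^{n-1}$. Setting $a_i:=|A_i|$ and $N:=|A|$, the weight splits as
\begin{equation*}
\sum_{a\in A}w(a)=\sum_i\sum_{a'\in A_i}w(a')+(N-a_0),
\end{equation*}
the second sum recording the contribution of the last coordinate when it is nonzero. Applying the inductive hypothesis to each slice and rearranging reduces the theorem to $N\log_2 N-\sum_i a_i\log_2 a_i\geq 2(N-a_0)$. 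Introducing $p_i:=a_i/N$ and noting that $p_0=\max_i p_i$ by the nesting of the slices, this is the entropy inequality
\begin{equation*}
-\sum_i p_i\log_2 p_i\,\geq\,2(1-p_0).
\end{equation*}

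The main obstacle is establishing this entropy inequality, since neither of the two textbook lower bounds, $-\sum p_i\log_2 p_i\geq h(p_0)$ (with $h$ the binary entropy) and $-\sum p_i\log_2 p_i\geq -\log_2 p_0$, suffices on its own across the full range of $p_0$. I would dispatch it by a case split. If $p_0\geq 1/2$, use the first bound: since $h$ is concave and matches the affine function $p\mapsto 2(1-p)$ at the two endpoints $p=1/2$ and $p=1$, concavity forces $h(p)\geq 2(1-p)$ on $[1/2,1]$. If $p_0\leq 1/2$, use the second bound: a direct calculation shows that $p\cdot 4^{1-p}$ is increasing on $(0,1/2]$ with value $1$ at $p=1/2$, giving $-\log_2 p\geq 2(1-p)$ throughout. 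The two regimes meet at $p_0=1/2$, where each bound yields the value $1$, and together they cover all of $(0,1]$, closing the induction.
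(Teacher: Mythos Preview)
Your proof is correct and takes a genuinely different route from the paper's. The paper uses a \emph{double} induction, on $n$ and on $|A|$: after reformulating the claim as $n|A|\le\sum_i|\pi_i(A)|+\frac12|A|\log_2|A|$, it peels off only the single top layer, writing $A=B\cup(le_n+C)$ where $C$ is the slice at height $l$ and $B$ is everything below. Induction on $n$ is applied to $C$, induction on $|A|$ to $B$, and the residual inequality is the two-variable statement $1+\tfrac12\tau\log_2\tau\le\tfrac12(\tau+1)\log_2(\tau+1)$ with $\tau=|B|/|C|\ge 1$.

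By contrast, you slice all layers at once and use a single induction on $n$, at the price of having to establish the multi-point entropy inequality $H(p)\ge 2(1-p_0)$ under the constraint $p_0=\max_i p_i$. In fact the two auxiliary inequalities are closely related: the paper's two-variable inequality is precisely your bound $h(p_0)\ge 2(1-p_0)$ for $p_0\ge 1/2$ (set $p_0=\tau/(\tau+1)$), and the paper's layer-by-layer recursion is what lets it avoid ever facing the regime $p_0<1/2$. Your approach buys a cleaner induction scheme; the paper's buys a simpler calculus lemma. Both are equally valid.
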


It is easy to see that for sets of the form
$A=[0,l_1]\times\dots\times[0,l_n]$ with $l_1\longc l_n\in\{0,1\}$, equality
is attained in the estimate of Theorem~\reft{avweightZ}.

Theorem~\reft{avweightZ} looks strikingly similar to \cite[Theorem 1.1]{b:r}
which says that if a set $A\seq\{0,1\}^n$ is union-closed (that is,
$a_1,a_2\in A$ implies $a_1\vee a_2\in A$), then $\frac1{|A|} \sum_{a\in
A}w(a)\ge\frac12\log_2|A|$. However, it seems that the two results cannot be
reduced to each other.

For a somewhat unexpected link between (the proof of)
Theorem~\reft{avweightZ} and the Loomis-Whitney inequality, see a remark in
Section~\refs{concrem}.

In Section~3 we extend Theorem~\reft{avweightZ} to arbitrary abelian
groups.

One can equivalently restate Theorem~\reft{avweightZ} in terms of multisets.
Formally, a (finite) multiset with the ground set $S$ is a finitely supported
function from $S$ to $\Z_{\ge 0}$; the value of the function at the element
$s\in S$ is the \emph{multiplicity} of $s$ in the multiset. The cardinality
of the multiset $A$ is the sum $\sum_{s\in\supp A} A(s)$. For a multiset $A$
and an element $s\in S$, we write $s\in A$ if $A(s)>0$, and we define the
multiset $A\stm\{s\}$ by
  \[ A\stm\{s\}\colon t\mapsto
      \begin{cases}
        A(t), & \quad t\in S,\ t\ne s, \\
        \max\{A(s)-1,0\}, & \quad t=s.
      \end{cases} \]
The family $\cA$ of multisets (sharing the same ground set $S$) is
\emph{monotonic} if $A\in\cA$ implies $A\stm\{s\}\in\cA$ for each $s\in S$.

Since every element of $\Z^n_{\ge 0}$ defines a multiset with the ground
set $[n]$, and a set in $\Z^n_{\ge 0}$ is a downset if and only if the
corresponding family of multisets is monotonic, we have the following
restatement of Theorem~\reft{avweightZ}.
\begin{primetheorem}
If $\cA$ is a finite, non-empty, monotonic family of multisets, then
  $$ \frac1{|\cA|} \sum_{A\in\cA} |\supp A| \le \frac12\,\log_2|\cA|. $$
\end{primetheorem}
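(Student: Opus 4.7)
I would work with the equivalent downset formulation (Theorem~\reft{avweightZ}) and proceed by induction on the dimension $n$. The base case $n=1$ forces $A=\{0,1,\dots,M\}$, giving $|A|=M+1$ and $\sum_{a\in A}w(a)=M$, and the bound $\tfrac{M}{M+1}\le\tfrac12\log_2(M+1)$ is a routine one-variable check.

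For the inductive step, I would slice $A$ along the last coordinate: for each $k\ge 0$ set $A_k:=\{(x_1,\dots,x_{n-1}):(x_1,\dots,x_{n-1},k)\in A\}$. The downset property of $A$ makes each $A_k\seq\Z_{\ge 0}^{n-1}$ a downset and forces the chain $A_0\supseteq A_1\supseteq\cdots$. Splitting the weight as
$$ \sum_{a\in A} w(a)=\sum_{k\ge 0}\sum_{b\in A_k}w(b)+\sum_{k\ge 1}|A_k|, $$
applying the inductive hypothesis to each inner sum, and writing $a_k:=|A_k|$, $N:=|A|=\sum_k a_k$ and $p_k:=a_k/N$, routine algebra reduces the target inequality to the entropy lower bound
$$ H(p)\ge 2(1-p_0), $$
where $H(p):=-\sum_k p_k\log_2 p_k$ and $p=(p_0,p_1,\dots)$ is a non-increasing probability distribution on $\{0,1,2,\dots\}$.

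I expect the main obstacle to be this entropy lemma, which I would establish by a second induction, on the size of the support of $p$. The standard splitting $H(p)=H_2(p_0)+(1-p_0)H(q)$, where $H_2$ is the binary entropy and $q_k:=p_k/(1-p_0)$ for $k\ge 1$, produces a non-increasing distribution $q$ of strictly smaller support; the inductive hypothesis yields $H(q)\ge 2(1-q_1)$, and substitution reduces the lemma to the single-variable estimate $H_2(p_0)\ge 2p_1$. Since $p_1\le p_0$ (by monotonicity of $p$) and $p_1\le 1-p_0$ (as $p_1\le\sum_{k\ge 1}p_k$), it suffices to verify $H_2(t)\ge 2\min(t,1-t)$ on $[0,1]$. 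By symmetry one need only check $t\in[0,\tfrac12]$, where the function $H_2(t)-2t$ is concave (as the sum of the concave function $H_2$ and a linear one) and vanishes at both endpoints, hence is non-negative throughout. This closes out both inductions and completes the proof.
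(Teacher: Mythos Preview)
Your argument is correct. Both proofs ultimately rest on the same one-variable fact --- your $H_2(t)\ge 2t$ on $[0,\tfrac12]$ is a rewriting of the paper's $(\tau+1)\log_2(\tau+1)\ge \tau\log_2\tau+2$ for $\tau\ge 1$ --- but the decompositions differ. The paper peels off only the \emph{top} slice $C$ (the layer at the maximal $n$th coordinate) and keeps the remainder $B$ as a single downset in $\Z_{\ge 0}^n$; this forces a double induction on $n$ and on $|A|$, and passes through the projection inequality $n|A|\le\sum_i|\pi_i(A)|+\tfrac12|A|\log_2|A|$, which the paper highlights as being of independent interest (in particular, not a consequence of Loomis--Whitney). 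You instead slice $A$ into \emph{all} horizontal layers at once, reducing in a single step to $(n-1)$-dimensional downsets and packaging the residual work as the clean entropy lemma $H(p)\ge 2(1-p_0)$ for non-increasing distributions. Your route avoids the projection reformulation and needs only the induction on $n$ (plus the auxiliary induction inside the entropy lemma); the paper's route yields the projection inequality as a by-product.
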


For a finite subset $A$ of an abelian group $G$, denote by $\dim_I(A)$ the
largest size of an independent subset of $A$. Given an element $g\in G$,
define $r_A(g)$ to be the number of representations of $g$ as a difference of
two elements of $A$, and for real $\gam\in(0,1]$ let $P_\gam(A)$ be the set
of all $\gam$-popular differences in $A$; that is,
  $$ r_A(g):=|\{(a,a')\in A\times A\colon g=a-a'\}| $$
and
  $$ P_\gam(A):=\{ g\in G\colon r_A(g)\ge\gam|A|\}. $$

As a simple corollary of Theorems~\reft{exp234} and~\reft{generalcase}, in
Section~\refs{repa} we prove
\begin{theorem}\label{t:repa}
If $p$ is the smallest order of a non-zero element of an abelian group $G$,
then for any finite, non-empty subset $A\seq G$ and real $\gam\in[0,1)$, we
have
  $$ \dim_I(P_\gam(A)) \le (2(1-1/p))^{-1}\gam^{-1}\log_2|A|. $$
Moreover, if $\exp(G)=3$, then indeed we have
  $$ \dim_I(P_\gam(A)) \le \gam^{-1}\log_3|A|. $$
\end{theorem}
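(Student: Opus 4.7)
The plan is to choose $S$ to be a maximum-size independent subset of $P_\gam(A)\stm\{0\}$ and then apply Theorem~\reft{generalcase} to the pair $(A,S)$ to bound $|S|$ from above; for the sharper bound when $\exp(G)=3$ I would apply Corollary~\refc{cosetdecomp} instead. The edge case $\gam=0$ is vacuous and can be set aside at the outset.

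The main step is a short translation from popular differences to the edge boundary. The substitution $a=a'+s$ identifies $r_A(s)$ with $|\{a'\in A\colon a'+s\in A\}|$, so for every $s\in P_\gam(A)$ the number of $a\in A$ with $a+s\notin A$ equals $|A|-r_A(s)\le(1-\gam)|A|$. Writing $n:=|S|$ and summing over $s\in S$, I obtain
\[ \prt_S(A) \le (1-\gam)\,n\,|A|. \]

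From this point the two bounds are immediate. Theorem~\reft{generalcase} yields $|A|\ge 4^{(1-1/d)\gam n}$ with $d:=\min\{\ord s\colon s\in S\}$; since every non-zero element of $G$ has order at least $p$, we have $d\ge p$ and hence $1-1/d\ge 1-1/p$, which rearranges to $n\le(2(1-1/p))^{-1}\gam^{-1}\log_2|A|$. In the case $\exp(G)=3$, the independence of $S$ together with $\exp(G)=3$ forces $\<S\>\cong C_3^n$, so $\rk\<S\>=n$ and $|\<S\>|=3^n$; Corollary~\refc{cosetdecomp} then delivers $|A|\ge|\<S\>|^\gam=3^{\gam n}$, i.e., $n\le\gam^{-1}\log_3|A|$.

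I anticipate no real obstacle here: all the difficulty has been absorbed into the two isoperimetric stability results already established in the paper, and the whole argument reduces to the one-line observation that popularity of $s$ bounds the edge contribution of $s$. The only minor bookkeeping is to note that discarding $0$ does not shrink the maximum independent subset, since $0$ contributes nothing to either the rank or the edge boundary.
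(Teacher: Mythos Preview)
Your proposal is correct and follows essentially the same route as the paper: pick a maximum independent subset $S$ of $P_\gam(A)$, convert the popularity condition into $\prt_S(A)\le(1-\gam)n|A|$, and then invoke Theorem~\reft{generalcase} (respectively Corollary~\refc{cosetdecomp} for $\exp(G)=3$). You are in fact a bit more explicit than the paper in handling the edge cases (setting aside $\gam=0$, excluding $0$ from $S$, and spelling out why $d\ge p$).
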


In the situation where $G$ is homocyclic of exponent $m$, the estimate of
Theorem~\reft{repa} is sharp for $m\in\{2,3\}$, and reasonably close to sharp
for $m\ge 4$. To see this we essentially return back to Example~\refx{1} to
review it from a slightly different perspective.
\begin{example}
Fix integers $m\ge 2$ and $k,n\ge 1$ with $k\mid n$, and consider a direct
sum decomposition $C_m^n=H_1\oplus\dotsb\oplus H_k$ where each of $H_1\longc
H_k<C_m^n$ is isomorphic to $C_m^{n/k}$. Let $A:=H_1\cup\dotsb\cup H_k$, so
that $|A|=k(m^{n/k}-1)+1\le km^{n/k}$ and every non-zero element $a\in A$
satisfies $r_A(a)=m^{n/k}$. Setting $\gam:=m^{n/k}/|A|\ge k^{-1}$, we then
have
  $$ \dim_I(P_\gam(A)) \ge n =
      k\log_m(\gam|A|)\ge\gam^{-1}\log_m|A|-\gam^{-1}\log_m(\gam^{-1}). $$
\end{example}

It is interesting to compare Theorem~\reft{repa} with a result of Shkredov
and Yekhanin \cite[Theorem~3.1]{b:sy}. To this end we recall that a subset
$A$ of an abelian group is called \emph{dissociated} if the subset sums
$\sum_{a\in B}a$ are pairwise distinct, for all subsets $B\seq A$. The
\emph{additive dimension} of $A$, which we denote $\dim_D(A)$, is the size of
the largest dissociated subset of $A$. The result of Shkredov-Yekhanin
essentially says that if $A$ is a subset of a finite abelian group, then
\begin{equation}\label{e:sy}
  \dim_D(P_\gam(A)) \ll \gam^{-1}\log |A|
\end{equation}
with an absolute implicit constant. It is readily seen that every independent
set in an abelian group is dissociated, and that for the groups of exponent
$2$ and $3$, the two notions coincide. As a result, we have
$\dim_I(P)\le\dim_D(P)$, for every subset $P$ of the group, with equality for
groups of exponent $2$ or $3$. Consequently, the Shkredov-Yekhanin bound
\refe{sy} is qualitatively stronger than Theorem~\reft{repa} for groups of
exponent larger than $3$, while Theorem~\reft{repa} is stronger than
\refe{sy} for groups of exponent $2$ and $3$ (providing the sharp
coefficients in this case).

We now turn to the proofs of the results discussed above;
Theorem~\reft{avweightZ} will be proved in the next section,
Theorems~\reft{exp234} and~\reft{generalcase} and
Corollary~\refc{cosetdecomp} in Section~\refs{isoper}, and
Theorem~\reft{repa} in Section~\refs{repa}. Concluding remarks and open
problems are gathered in Section~\refs{concrem}.

\section{Proof of Theorem~\reft{avweightZ}}\label{s:avweightZ}

Given an integer $n\ge 1$, let $\{e_1\longc e_n\}$ be the standard basis of
$\R^n$, and for each $i\in[1,n]$ denote by $\cL_i$ the $i$th coordinate
hyperplane, and by $\pi_i$ the orthogonal projection of $\R^n$ onto $\cL_i$.

We use induction by $n$, and for every fixed value of $n$ by $|A|$. If $n=1$,
then
  $$ \frac{1}{|A|} \sum_{a\in A}w(a) = 1-\frac1{|A|}
                                               \le \frac12 \log_2|A| $$
as one can easily verify. Also,  the estimate in question is immediate if
$|A|=1$. Suppose therefore that $n\ge 2$ and also $|A|\ge 2$.

Since the set $A$ is a downset, it has exactly $|\pi_i(A)|$ elements on the
$i$th coordinate hyperplane $\cL_i$, for every $i\in[1,n]$. Consequently,
double-counting gives
\begin{align}
  \sum_{a\in A} w(a)
     &= \sum_{i=1}^n | \{ a\in A\colon e_i\in\supp a \} | \notag \\
     &= \sum_{i=1}^n (|A|-|\pi_i(A)|) \notag \\
     &= n|A| - (|\pi_1(A)|\longp|\pi_n(A)|), \notag
\end{align}
and we thus want to prove that
\begin{equation*}
  n|A| \le |\pi_1(A)|\longp|\pi_n(A)| + \frac12 |A| \log_2|A|.
\end{equation*}

Using again the assumption that $A$ is a downset, we conclude that its
projection onto the $n$th coordinate axis is an interval $[0,l]$ with an
integer $l\ge 0$, and we partition $A$ as $A=B\cup(le_n+C)$ where
$C\seq\cL_n$ and $B\cap(le_n+\cL_n)=\est$.

If $B=\est$, then $A\seq\cL_n$; consequently,
  $$ (n-1)|A| \le |\pi_1(A)|\longp|\pi_{n-1}(A)| + \frac12 |A| \log_2|A| $$
by the induction hypothesis, and combining this with $|\pi_n(A)|=|A|$ we
get the assertion.

If $B\ne\est$, then both $B\seq\Z_{\ge0}^n$ and $C\seq\Z_{\ge0}^{n-1}$ are
downsets, and the induction hypothesis gives
\begin{equation}\label{e:phi1}
  (n-1)|C| \le |\pi_1(C)|\longp|\pi_{n-1}(C)| + \frac12 |C| \log_2|C|
\end{equation}
and
\begin{equation}\label{e:phi2}
  n|B| \le |\pi_1(B)|\longp|\pi_n(B)| + \frac12 |B| \log_2|B|.
\end{equation}
Since $|\pi_i(A)|=|\pi_i(B)|+|\pi_i(C)|$ for $i\in[1,n-1]$, and
$\pi_n(A)=\pi_n(B)$ by the downset assumption, to complete the proof it
suffices to show that
\begin{equation}\label{e:phi3}
  |C| + \frac12 |C| \log_2|C| + \frac12 |B| \log_2|B|
                                                \le \frac12|A|\log_2|A|;
\end{equation}
the assertion will then follow by adding together \refe{phi1},
\refe{phi2}, and \refe{phi3}. To prove \refe{phi3} we let
$\tau:=|B|/|C|$, so that $\tau\ge 1$. Dividing through by $|C|$ and
substituting $|B|=\tau|C|$ and $|A|=(\tau+1)|C|$ into \refe{phi3} brings
it to the form
  $$ 1 + \frac12 \tau\log_2\tau \le \frac12(\tau+1)\log_2(\tau+1), $$
an inequality which is easy to verify using basic calculus.

\section{Proofs of Corollary~\refc{cosetdecomp} and Theorems~\reft{exp234}
         and~\reft{generalcase}}\label{s:isoper}

As indicated above, Theorem~\reft{exp234} is an immediate consequence of
\cite[Corollary~1.10]{b:l}, which says that if $G$ is a finite abelian group of
exponent $m\in\{2,3,4\}$, then for any generating subset
 $S\seq G$ and any non-empty subset $A\seq G$ one has
$\prt_S(A)\ge |A|\log_m(|G|/|A|)$; combined with the assumption
$\prt_S(A)\le(1-\gam)n|A|$, this yields $\log_m(|G|/|A|)\le(1-\gam)n$, and
the assertion of Theorem~\reft{exp234} follows.

\begin{proof}[Proof of Corollary~\refc{cosetdecomp}]
Consider the coset decomposition $A=(a_1+A_1)\cup\cdots\cup(a_k+A_k)$, where
$A_1\longc A_k\seq H$, and $a_1\longc a_k$ represent pairwise distinct
$H$-cosets. The case where $S=\{0\}$ is immediate, and we thus assume that
$S\ne\{0\}$, and therefore $H$ is non-trivial. From
  $$ \sum_{i=1}^k \prt_S(A_i) = \prt_S(A) \le (1-\gam)n|A|
                                            = (1-\gam)n\sum_{i=1}^k |A_i| $$
we conclude that there exists $i\in[1,k]$ with
$\prt_S(A_i)\le(1-\gam)n|A_i|$. Since $\exp(G)\in\{2,3\}$ implies that also
$\exp(H)\in\{2,3\}$, and therefore $H$ is homocyclic, by
Theorem~\reft{exp234} we now get
  $$ |A| \ge |A_i| \ge |H|^\gam, $$
as required.
\end{proof}

Below in this section we prove Theorem~\reft{generalcase}. The proof uses the
compression technique in the general settings of arbitrary finite abelian
groups, so we start with setting up the machinery of this general
compression.

Suppose that $G$ is an abelian group, and
 $S=\{s_1\longc s_n\}\seq G$ is a finite, independent, generating subset; that is,
for each $i\in[1,n]$, letting $S_i:=S\stm\{s_i\}$, we have
$G=\<S_i\>\oplus\<s_i\>$. For integer $k\ge 0$ and elements $g,v\in G$ with
$\ord v\ge k$, write $P(g,v,k):=\{g,g+v\longc g+(k-1)v\}$; thus, $P(g,v,k)$
is the $k$-term arithmetic progression with the first term $g$ and difference
$v$. Given a finite subset $A\seq G$ and an index $i\in[1,n]$, define the
compression of $A$ along $s_i$ to be the set
  $$ [A]_i := \bigcup_{g\in\<S_i\>} P\big(g,s_i,|A\cap(g+\<s_i\>)|\big); $$
that is, in each $\<s_i\>$-coset the set $[A]_i$ has exactly as many elements
as the original set $A$, but the elements are ``stacked towards the beginning
of the coset''. Thus, writing for every $i\in[1,n]$
  $$ K_i := \begin{cases}
              [0,\ord(s_i))\ &\text{if $s_i$ has finite order,} \\
              \Z\ &\text{otherwise},
            \end{cases} $$
for an element $g\in G$ to lie it $[A]_i$, it is necessary and sufficient
that in the (unique) representation $g=h+ks_i$ with $h\in\<S_i\>$ and $k\in
K_i$, we had in fact $0\le k<|(g+\<s_i\>)\cap A|$.

We say that $A$ is $i$-compressed if $[A]_i=A$; that is, for each
$g\in\<S_i\>$ and $k\in K_i$, we have $g+ks_i\in A$ if and only if $0\le
k<|(g+\<s_i\>)\cap A|$. Equivalently, $A$ is $i$-compressed if
$A\stm\<S_i\>\seq A+s_i$. Clearly, $[A]_i$ is $i$-compressed for any
 $A\seq G$ and $i\in[1,n]$.

The set $A\seq G$ is \emph{compressed with respect to $S$} if it is
$i$-compressed for each $i\in[1,n]$.

We now show that compression along any element of $S$ does not destroy the
property of being compressed along other elements of $S$.
\begin{claim}\label{m:comp1}
If, for some $i,j\in[1,n]$, the set $A\seq G$ is $i$-compressed, then so
is the set $[A]_j$.
\end{claim}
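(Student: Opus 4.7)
The plan is to use the fact that, because $S$ is independent and generating, every $g\in G$ has a unique coordinate representation $g=\sum_{k=1}^n c_k(g)\, s_k$ with $c_k(g)\in K_k$. In these coordinates, the $\<s_i\>$-coset of $g$ is the set of points obtained by varying $c_i$ over $K_i$ while keeping the other coordinates fixed, and $A$ being $i$-compressed means that the intersection of $A$ with each such coset is precisely the set of points with $c_i\in[0,m)$, where $m=m_i(g)$ denotes the size of that intersection.

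If $i=j$, there is nothing to prove: $[A]_j=[A]_i$ is $i$-compressed by definition of the compression operation. So I would assume $i\ne j$ and fix an $\<s_i\>$-coset by specifying all coordinates $c_k$, $k\ne i$. To show $[A]_j$ is $i$-compressed, I would check that if $g\in[A]_j$ lies in this coset with $c_i(g)=t$, and if $0\le t'<t$, then the element $g'=g-(t-t')s_i$ (which has $c_i(g')=t'$ and all other coordinates equal to those of $g$) also belongs to $[A]_j$. By definition of $[A]_j$, this amounts to comparing $c_j(g)=c_j(g')$ with the sizes $|A\cap(g+\<s_j\>)|$ and $|A\cap(g'+\<s_j\>)|$.

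The crux is therefore the inequality
\[ |A\cap(g+\<s_j\>)| \le |A\cap(g'+\<s_j\>)|. \]
I would prove this by exhibiting an injection from the first set to the second. For every $x\in A\cap(g+\<s_j\>)$, let $x'$ be obtained from $x$ by replacing its $i$-th coordinate $t$ by $t'$. Then $x'\in g'+\<s_j\>$ because $x$ and $x'$ differ only in the $i$-th coordinate. Moreover, $x$ and $x'$ lie in the same $\<s_i\>$-coset, which contains $x$ and hence meets $A$ in at least $t+1$ points; by $i$-compression of $A$, this coset then contains all points in it with $i$-th coordinate in $[0,t]$, in particular $x'\in A$. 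The map $x\mapsto x'$ is manifestly injective, yielding the required inequality.

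Combining this with $c_j(g')=c_j(g)<|A\cap(g+\<s_j\>)|\le|A\cap(g'+\<s_j\>)|$ shows that $g'\in[A]_j$, completing the verification that $[A]_j$ is $i$-compressed. There is no real obstacle here beyond notational bookkeeping: the whole argument is the routine observation that pushing elements down along $s_i$ cannot destroy the monotonicity of the $\<s_j\>$-coset sizes in $c_i$, and the independence of $S$ is used only to make the coordinatewise picture well-defined so that changes in the $i$-th coordinate are unaffected by what happens in the $j$-th.
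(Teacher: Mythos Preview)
Your proof is correct and follows essentially the same approach as the paper: both establish the key inequality $|A\cap(g+\langle s_j\rangle)|\le|A\cap(g'+\langle s_j\rangle)|$ via an injection that shifts the $i$-th coordinate down (the paper does the one-step version $g'=g-s_i$ using the characterization $A\setminus\langle S_i\rangle\subseteq A+s_i$, you do the general $t'\le t$ version), and then use this inequality together with $c_j(g')=c_j(g)$ to carry membership in $[A]_j$ from $g$ to $g'$. The only difference is cosmetic---you work in explicit coordinates throughout, whereas the paper phrases the same injection as ``$g+z\in A$ implies $g-s_i+z\in A$''.
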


\begin{proof}
The assertion is immediate if $i=j$, and we thus assume that $i\ne j$. We
further assume that $A$ is $i$-compressed and show that then also $[A]_j$ is
$i$-compressed.

It suffices to show that for any $g\in[A]_j\stm\<S_i\>$, we have
$g\in[A]_j+s_i$. Since $A$ is $i$-compressed, for any $z\in\<s_j\>$ with
$g+z\in A$ we also have $g-s_i+z\in A$; consequently,
  $$ |(g+\<s_j\>)\cap A| \le |(g-s_i+\<s_j\>)\cap A|, $$
whence
\begin{equation}\label{e:comp1}
  |(g+\<s_j\>)\cap[A]_j| \le |(g-s_i+\<s_j\>)\cap[A]_j|.
\end{equation}
Write $g=h+ks_j$ with $h\in\<S_j\>$ and $k\in K_j$. From $g\in[A]_j$ we have
$0\le k<|(g+\<s_j\>)\cap[A]_j|$. Now \refe{comp1} yields $0\le
k<|(g-s_i+\<s_j\>)\cap[A]_j|=|(h-s_i+\<s_j\>)\cap[A]_j|$, and it follows that
$g-s_i=(h-s_i)+ks_j\in[A]_j$, as wanted.
\end{proof}

\begin{claim}\label{m:comp2}
For any $G$, $A$, and $S$ as above, and any $i,j\in[1,n]$, we have
  $$ | \{ a\in [A]_i\colon a+s_j\notin [A]_i \} |
                               \le | \{ a\in A\colon a+s_j\notin A \} |. $$
Consequently,
  $$ \prt_S([A]_i) \le \prt_S(A). $$
\end{claim}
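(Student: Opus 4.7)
The plan is to establish the pointwise inequality
\[ |\{a \in [A]_i \colon a+s_j \notin [A]_i\}| \le |\{a \in A \colon a+s_j \notin A\}| \]
for every $j \in [1,n]$; summing over $j$ then yields the consequence $\prt_S([A]_i) \le \prt_S(A)$. Since the compression $A \mapsto [A]_i$ acts independently within each $\<s_i\>$-coset, I would decompose both sides of this inequality into contributions according to the $\<s_i\>$-coset of the element $a$, and verify the bound coset-by-coset. The independence of $S$ guarantees that $s_j \in \<s_i\>$ can hold only if $j = i$ or $s_j = 0$, so the analysis splits into a generic sub-case $s_j \notin \<s_i\>$ and the special sub-case $j = i$ (the scenario $s_j = 0$ being trivial, as then both sides vanish).

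For the generic sub-case, fix a coset $C = g + \<s_i\>$; its image $C' := C + s_j$ is a distinct $\<s_i\>$-coset. Encode the occupancies by subsets $X, X' \seq K_i$ via
\[ A \cap C = \{g + ts_i \colon t \in X\}, \qquad A \cap C' = \{g + s_j + ts_i \colon t \in X'\}. \]
The contribution of $C$ to the right-hand side of the target inequality is then exactly $|X \stm X'|$, while after compression $[A]_i \cap C$ and $[A]_i \cap C'$ are parameterized by the initial segments of $K_i$ of lengths $|X|$ and $|X'|$, so the corresponding contribution to the left-hand side becomes $\max(|X| - |X'|, 0)$. The elementary estimate $|X \stm X'| = |X| - |X \cap X'| \ge |X| - |X'|$, combined with non-negativity, delivers the coset-wise bound; summing over $C$ (a well-defined traversal, since each coset serves as a source coset exactly once) finishes this sub-case.

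For the sub-case $j = i$, the compressed occupancy $[A]_i \cap C$ is an initial segment of length $k := |A \cap C|$, whose contribution to the left-hand side is $0$ if $k \in \{0, \ord(s_i)\}$ and exactly $1$ otherwise (coming from the terminal element $g + (k-1)s_i$). Meanwhile, the contribution of $A \cap C$ to the right-hand side is $0$ in the first two sub-cases and at least $1$ in the last, since any proper non-empty subset of a cyclic group must contain some element whose shift by a generator leaves the subset. The main subtlety is the bookkeeping of setting up $X, X'$ correctly and verifying that the coset-wise sums recover both sides on the nose; once this is done, the inequality reduces to the trivial combinatorial fact $|X \stm X'| \ge \max(|X| - |X'|, 0)$ together with the elementary case analysis above.
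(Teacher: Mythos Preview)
Your proof is correct and follows the same approach as the paper: decompose both sides according to the $\<s_i\>$-coset of $a$ and reduce the comparison on each coset to the elementary bound $|X\setminus X'|\ge\max(|X|-|X'|,0)$. Your explicit separation of the case $j=i$ is in fact slightly more careful than the paper's uniform treatment, since the paper's claimed equality $|\{a\in(g+\<s_i\>)\cap[A]_i:a+s_j\notin[A]_i\}|=\max\{|(g+\<s_i\>)\cap[A]_i|-|(g+s_j+\<s_i\>)\cap[A]_i|,0\}$ only literally holds when $j\ne i$ (for $j=i$ the right side is $0$ while the left side equals $1$ whenever the coset is partially occupied), and your case analysis covers exactly this gap.
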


\begin{proof}
The assertion follows by decomposing
  $$ | \{ a\in A\colon a+s_j\notin A \} |
       = \sum_{g\in\<S_i\>} | \{ a\in (g+\<s_i\>)\cap A
                                               \colon a+s_j\notin A \} | $$
and
  $$ | \{ a\in [A]_i\colon a+s_j\notin [A]_i \} |
      = \sum_{g\in\<S_i\>} | \{ a\in (g+\<s_i\>)\cap [A]_i
                                          \colon a+s_j\notin [A]_i \} |, $$
and observing that, for any fixed $g\in\<S_i\>$, one has
  $$ | \{ a\in (g+\<s_i\>)\cap A\colon a+s_j\notin A \} |
         \ge \max \{ |(g+\<s_i\>)\cap A| - |(g+s_j+\<s_i\>)\cap A|, 0 \}, $$
and that equality holds if $A$ gets replaced with $[A]_i$:
\begin{multline*}
  | \{ a\in (g+\<s_i\>)\cap [A]_i\colon a+s_j\notin [A]_i \} |
           = \max \{|(g+\<s_i\>)\cap [A]_i|-|(g+s_j+\<s_i\>)\cap [A]_i|,0\}
\end{multline*}
(recall that both intersections $(g+\<s_i\>)\cap [A]_i$ and
$(g+s_j+\<s_i\>)\cap [A]_i$ are initial segments of the corresponding
cosets $g+\<s_i\>$ and $g+s_j+\<s_i\>$.)
\end{proof}

Claims \refm{comp1} and \refm{comp2} show that if, starting with $A$, one
subsequently applies compressions along each of the elements $s_1\longc s_n$,
then the resulting set $\tA$ is compressed with respect to $S$, and we have
$|\tA|=|A|$ and $\prt_S(\tA)\le\prt_S(A)$.

We need the following corollary (indeed, a generalization) of
Theorem~\reft{avweightZ}.
\begin{corollary}\label{c:avweightG}
Suppose that $S$ is a finite, independent generating set in an abelian group
$G$. If $A\seq G$ is finite, non-empty, and compressed with respect to $S$,
then
  $$ \frac1{|A|} \sum_{a\in A} w(a) \le \frac12\, \log_2 |A|, $$
where $w(a)$ is the number of non-zero summands in the representation of $a$
as a linear combination of the elements from $S$.
\end{corollary}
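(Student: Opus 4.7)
The plan is to transport the problem from $G$ to $\Z_{\ge 0}^n$ and then invoke Theorem~\reft{avweightZ} directly. Since $S=\{s_1\longc s_n\}$ is independent and generating, every $g\in G$ has a unique representation $g=\sum_{i=1}^n k_i(g) s_i$ with $k_i(g)\in K_i$, and the weight $w(g)$ is precisely the number of non-zero coordinates $k_i(g)$. This gives an injection
\[ \phi\colon G\to \prod_{i=1}^n K_i, \qquad \phi(g)=(k_1(g)\longc k_n(g)), \]
and I would set $\hat A:=\phi(A)$.

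First I would observe that the compression hypothesis forces $\hat A\seq\Z_{\ge 0}^n$: for each $i$, $i$-compressedness means that the elements of $A$ in any coset $g+\<s_i\>$ form the initial segment $P(g,s_i,m)$ with $m=|(g+\<s_i\>)\cap A|$, so the coordinates $k_i(a)$ lie in $[0,m-1]\seq\Z_{\ge 0}$. Since $\phi$ is a bijection onto its image, $|\hat A|=|A|$ and $w(a)=w(\phi(a))$ for every $a\in A$, with the right-hand side being the Hamming weight in $\Z^n$.

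Next I would verify that $\hat A$ is a downset in $\Z_{\ge 0}^n$. Given $a\in A$ with $\phi(a)=(k_1\longc k_n)$ and any coordinate-wise majorated vector $(k_1'\longc k_n')\in\Z_{\ge 0}^n$, I would lower the coordinates one index at a time: having already shown that $h:=k_1's_1\longp k_{i-1}'s_{i-1}+k_is_i\longp k_ns_n$ lies in $A$, the $i$-compressedness of $A$ applied to the coset $h-k_is_i+\<s_i\>$ (on which $A$ occupies an initial segment containing $h$) yields $h-k_is_i+k_i's_i\in A$ since $0\le k_i'\le k_i$. Iterating, $\sum k_i's_i\in A$, establishing the downset property.

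Finally, I would apply Theorem~\reft{avweightZ} to $\hat A$ to conclude
\[ \frac1{|A|}\sum_{a\in A}w(a) = \frac1{|\hat A|}\sum_{\hat a\in\hat A} w(\hat a) \le \frac12\log_2|\hat A| = \frac12\log_2|A|. \]
There is essentially no real obstacle here; the only point requiring care is the downset verification above, which must use the $i$-compressedness hypothesis for each $i$ separately rather than a single compression step.
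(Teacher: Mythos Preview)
Your proposal is correct and follows exactly the same approach as the paper: define the coordinate map $\phi$ into $\Z^n$, observe that compression with respect to $S$ forces $\phi(A)$ to be a downset in $\Z_{\ge 0}^n$, and apply Theorem~\reft{avweightZ}. The only difference is that you spell out the verification that $\phi(A)$ lies in $\Z_{\ge 0}^n$ and is a downset (by lowering one coordinate at a time), whereas the paper simply asserts this.
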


\begin{proof}
Let $n:=|S|$, write $S=\{s_1\longc s_n\}$, and consider the injective mapping
$\phi\colon G\to\Z^n$ defined by
  $$ \phi(z_1s_1\longp z_ns_n)
                            = (z_1\longc z_n),\ z_i\in K_i\ (1\le i\le n). $$
Clearly, the weight function on $G$ (with respect to $S$) agrees with that on
$\phi(G)$ (with respect to the standard basis of $\R^n$), and the assumption
that $A$ is compressed with respect to $S$ ensures that the image $\phi(A)$
is a downset. The assertion now follows by applying Theorem~\reft{avweightZ}
to $\phi(A)$.
\end{proof}

\begin{proof}[Proof of Theorem~\reft{generalcase}]
Let $G$ denote the underlying abelian group. We can assume that $S$ generates
$G$; once the assertion is established in this case, the general case follows
easily by considering the coset decomposition of $A$ as in
Corollary~\refc{cosetdecomp}.

In view of the remark following Claim~\refm{comp2}, we can also assume
without loss of generality that $A$ is compressed with respect to $S$.

Finally, we assume that at least one element of $S$ has finite order; the
modifications to be made in the argument below in the case where all elements
of $S$ have infinite order are straightforward (and indeed, the proof gets
only simpler in this case).

Write $S=\{s_1\longc s_n\}$, and for every $i\in[1,n]$, let
  $$ \prt_S^{(i)}(A) := | \{ a \in A \colon a+s_i \notin A \} |. $$
Define $N_i$ to be the number of $\<s_i\>$-cosets having a non-empty
intersection with $A$, and let $N_i'$ be the number of $\<s_i\>$-cosets
entirely contained in $A$; thus, $N_i'=0$ if the order of $s_i$ is infinite.
We have
\begin{equation}\label{e:decompsum}
   \prt_S(A)=\prt_S^{(1)}(A)\longp\prt_S^{(n)}(A)
\end{equation}
and, since $A$ is compressed,
\begin{equation}\label{e:generalcaseproof1}
  \prt_S^{(i)}(A) = N_i-N_i',\quad i\in[1,n].
\end{equation}
If $s_i$ is of finite order then, writing $d_i:=\ord s_i$, in a trivial way
we have
\begin{equation}\label{e:generalcaseproof2}
  \prt_S^{(i)}(A) \le |A| - d_i N_i' \le |A| - d N_i',
\end{equation}
and the resulting estimate remains true if $s_i$ is of infinite order. From
\refe{generalcaseproof1} and \refe{generalcaseproof2},
  $$ (d-1)\prt_S^{(i)}(A) \ge d N_i - |A|, $$
and then \refe{decompsum} along with the assumption
$\prt_S(A)\le(1-\gam)n|A|$ yield
\begin{equation}\label{e:generalcaseproof3}
   (d-1)(1-\gam)n|A| \ge (d-1) \prt_S(A) \ge d \sum_{i=1}^n N_i - n|A|.
\end{equation}
To address the sum in the right-hand side we notice that $N_i=|\<S_i\>\cap
A|=|A|-|A\stm\<S_i\>|$ for each $i\in[1,n]$ by the compression assumption.
Consequently, in view of
  $$ w(g) := | \{ i\in[1,n]\colon g\notin\<S_i\> \} |, \quad g\in G, $$
we have
  $$ \sum_{i=1}^n N_i = n|A| - \sum_{a\in A} w(a). $$
Substituting into \refe{generalcaseproof3} and simplifying we get
  $$ \frac1{|A|} \sum_{a\in A} w(a) \ge (1-1/d)\gam n, $$
and the result is now immediate from Corollary~\refc{avweightG}.
\end{proof}

\section{Proof of Theorem~\reft{repa}}\label{s:repa}

Let $S$ be an independent subset of the popular difference set $P_\gam(A)$,
and write $n:=|S|$. Every element $s\in S$ has at least $\gam |A|$
representations as $s=a'-a$ with $a,a'\in A$. This results in at least
$\gam|A|\cdot|S|$ pairs $(a,s)\in A\times S$ with $a+s\in A$. Hence,
  $$ \prt_S(A) \le |A||S| - \gam|A||S| = (1-\gam) n|A|. $$
Consequently,
  $$ |A| \ge 4^{(1-1/p)\gam n} $$
by Theorem~\reft{generalcase}, which yields
$n\le(2(1-1/p))^{-1}\gam^{-1}\log_2|A|$. The first estimate follows by
choosing $S$ so as to have $n=\dim_I(P_\gam(A))$.

The second estimate is obtained by replacing Theorem~\reft{generalcase} with
Corollary~\refc{cosetdecomp} to get
  $$ |A| \ge |G|^\gam \ge 3^{\gam n}. $$

\section{Concluding Remarks and Open Problems}\label{s:concrem}

As observed by Thomas Bloom (personal communication), the assumption
$\prt_S(A)\le(1-\gam)|A||S|$ can be equivalently written as
$\<1_A\circ1_A,1_S\>\ge\gam|A||S|$; here $1_A$ and $1_S$ are the indicator
functions of $A$ and $S$, respectively, the angle brackets are used for the
scalar product of complex-valued functions on the underlying group $G$, and
the ``skew convolution'' $f\circ g$ of the functions $f$ and $g$ is defined
by $(f\circ g)(z)=\sum_{x\in G}f(x)g(x+z)$. One can use H\"older's inequality
and basic Fourier analysis to conclude that if $S$ is dissociated, then
$\prt_S(A)\le(1-\gam)|A||S|$ implies $|A|>\exp(c\gam^2|S|)$, with an absolute
constant $c>0$. This conclusion is weaker than that given by
Theorem~\reft{generalcase}, but it requires dissociativity only (instead of
the more restrictive independence).

Our principal results, Theorems~\reft{exp234} and~\reft{generalcase}, show
that a subset of an abelian group with small edge boundary must be large.
It would be very interesting to understand exactly \emph{why} this happens.
What can be said about the \emph{structure} of a set with small edge
boundary?

In connection with Theorem~\reft{generalcase}, it is interesting to determine
whether the following is true: for any generating subset $S$ of a finite
abelian group $G$, if $\prt_S(A)\le(1-\gam)n|A|$ with $n:=\rk G$ and real
$\gam\in(0,1]$, then $|A|\ge4^{(1-1/d)\gam n}$ (where $d$ is the minimal
order of an element of $S$). It would also be interesting to find out whether the
coefficient $1-1/d$ in the exponent can be improved, or dropped altogether,
in the case where $G$ is homocyclic of exponent $\exp(G)\ge 5$ (the case
$\exp(G)\le 4$ is settled by Theorem~\reft{exp234} and Example~\refx{1}).

In the course of the proof of Theorem~\reft{avweightZ}, we have shown that
any finite, non-empty downset $A\seq\Z^n_{\ge 0}$ satisfies
\begin{equation}\label{e:LW+}
  n|A| \le |\pi_1(A)|\longp|\pi_n(A)| + \frac12 |A| \log_2|A|,
\end{equation}
where $\pi_1\longc\pi_n$ are projections onto the coordinate hyperplanes. In
fact, since compression can only reduce the sizes of the projections
$\pi_i(A)$, inequality \refe{LW+} holds true for \emph{any} finite, non-empty
set $A\seq\Z^n$, not necessarily a downset. Interestingly, this inequality
does not follow from the famous Loomis-Whitney inequality
$|\pi_1(A)|\dotsb|\pi_n(A)|\ge|A|^{n-1}$: for instance, the latter does not
exclude the possibility that there is a set $A\seq\Z^3$ with $|A|=5$ and
$|\pi_1(A)|=|\pi_2(A)|=|\pi_3(A)|=3$, while ~\refe{LW+} shows that such a set
cannot exist.

For Theorem~\reft{repa}, it would be interesting to determine the best
possible coefficient for homocyclic groups of exponent larger than $3$.

\section*{Acknowledgments} 
We are grateful to Thomas Bloom for the observation at the beginning of
Section~\refs{concrem}.

\bibliographystyle{amsplain}


\begin{dajauthors}
\begin{authorinfo}[lev]
  Vsevolod F. Lev\\
  Department of Mathematics\\
  The University of Haifa at Oranim\\
  Tivon 36006\\
  Israel\\
  seva@math.haifa.ac.il\\
  \end{authorinfo}
\end{dajauthors}

\end{document}